\newtheorem*{theorem}{\indent {\sc \textbf{Theorem}}}
\newtheorem{lemma}{\indent {\sc \textbf{Lemma}}}
\begin{document}

\title{On the Mean Values of the Function $\tau_k(n)$
in Sequences of Natural Numbers}%
\author{K. M.\'Eminyan}%
\address{Bauman State Technical University, Moscow}%
\email{eminyan@mail.ru}%

\keywords{sequence of natural numbers, trigonometric sum, number system of base q,
complex-valued function, inequality of the large sieve.}%

\begin{abstract}
We obtain an asymptotic formula for the mean value of the function $\tau_k(n)$, which is the
number of solutions of the equation $x_1 \cdots\, x_k=n$ in natural numbers $x_1,\ldots , x_k$ in some special sequences of natural numbers.
\end{abstract}
\maketitle
\begin{center}
{\bf{1. INTRODUCTION}}
\end{center}
\bigskip

Suppose that $q>1$, $n=c_{0}+c_{1}q+\ldots+c_{\nu}q^{\nu}$, $0\leqslant c_{0}, c_{1},\ldots, c_{\nu}<q$ --- the expansion of the natural number $n$ in the number system of base $q$. Then $S(n)=c_{0}+ c_{1}+\ldots+ c_{\nu}$.

In \cite{1}, Gelfond proved the following theorem: \emph{For the number $n$, $n\le x$, of integers, satisfying
the conditions
$$
n \equiv l\pmod m,\quad \sum_{b=0}^\nu c_b\equiv a \pmod p,\quad n=\sum_{b=0}^\nu c_bq^b, $$
where $q>1$, $p>1$, $m>1$; $l$ and $a$ are integers, and $(p, q-1) = 1$, the following asymptotic
formula holds:
$$
T_0(x)=\frac{x}{mp}+O(x^\lambda),\quad \lambda<1,
$$
where $\lambda$ is independent of $x$, $m$, $l$, $a$.}

In particular, if $p = q = 2$, then $\lambda = (\ln 3)/(2 \ln 2)$. In this particular case, the author obtained \cite{2} an asymptotic formula for the sum of the form
$$
\sum_{{n\leqslant X}\atop{\sum c_b\equiv a \pmod 2}}\tau(n),
$$
where $a$ is either 0 or 1.

The proof of Gelfond's theorem is based on his estimate of a trigonometric sum, which we cite in
Lemma 1.

In the present paper, we continue studies in this direction. The main result is the following theorem.

\begin{theorem}\label{t1}
Suppose that $k\geqslant 2$, $p>1$, and $\varepsilon >0$ is an arbitrarily small number. Suppose that $q$ is
a large natural number such that
$$\theta(q)=\frac{\ln(6(1+\ln q))}{\ln q}<\frac{1}{k},$$
here  $(q-1, p)=1$. Suppose that $a\in\mathbb{Z}$ is an arbitrary number.

Then the following asymptotic formula holds:
\[
\sum\limits_{n \leqslant x;S(n) \equiv a(\bmod p)} {\tau _k (n) = \frac{1}
{p}} \sum\limits_{n \leqslant x} {\tau _k (n) + O\left( {x^{1 - \frac{1}
{k} + \theta (q) + \varepsilon } } \right)}  + O\left( {x^{\lambda  + \varepsilon } } \right),
\]
where $\lambda \in(0, 1)$ depends only on $p$ and $q$.
\end{theorem}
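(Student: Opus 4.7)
My plan is to detect the congruence $S(n)\equiv a\pmod p$ through orthogonality of additive characters modulo $p$, namely
\[
\mathbf{1}_{S(n)\equiv a\,(p)}=\frac{1}{p}\sum_{h=0}^{p-1}e^{2\pi ih(S(n)-a)/p}.
\]
The $h=0$ contribution produces the announced main term $\tfrac{1}{p}\sum_{n\le x}\tau_k(n)$ exactly, so the entire problem reduces to estimating, for each fixed $h\in\{1,\ldots,p-1\}$, the twisted sum
\[
T_h(x)=\sum_{n\le x}\tau_k(n)\,e^{2\pi ihS(n)/p}.
\]
The hypothesis $(q-1,p)=1$ is precisely what guarantees that $e^{2\pi ih/p}$ is not a $(q-1)$-st root of unity, which is the very scenario under which Gelfond's Lemma~1 yields a nontrivial saving of the form $\sum_{n\le N}e^{2\pi ihS(n)/p}\ll N^{\lambda}$ with $\lambda=\lambda(p,q)<1$.

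To bring Gelfond's estimate into contact with $\tau_k$, I would open $\tau_k(n)=\#\{(n_1,\ldots,n_k):n_1\cdots n_k=n\}$ and apply the $k$-dimensional hyperbola method, partitioning $k$-tuples with $n_1\cdots n_k\le x$ according to which coordinate is largest. By symmetry it suffices to handle the region in which $n_k$ is maximal: there $n_k\ge x^{1/k}$, the complementary product $M:=n_1\cdots n_{k-1}\le x^{1-1/k}$, and for each fixed $M$ one is reduced to estimating the inner sum
\[
W_h(M)=\sum_{x^{1/k}\le n\le x/M}e^{2\pi ihS(Mn)/p}.
\]
Once $W_h(M)$ is controlled with a suitable saving, one sums
\[
|T_h(x)|\le\sum_{M\le x^{1-1/k}}\tau_{k-1}(M)\,|W_h(M)|+(\text{analogous terms}),
\]
and the standard bound $\sum_{M\le y}\tau_{k-1}(M)\ll y(\log y)^{k-2}$ converts the inner saving into the first error term, with the exponent $\theta(q)=\log(6(1+\log q))/\log q$ appearing as the explicit loss from Gelfond's product estimate in base~$q$.

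The main obstacle, as I see it, is producing a uniform-in-$M$ bound on $W_h(M)$. Gelfond's Lemma~1 as stated controls the \emph{undilated} exponential sum, whereas here the argument is $S(Mn)$, and multiplication by $M$ scrambles the base-$q$ digits of $n$ through carry propagation. I expect this is overcome either by (i) adapting Gelfond's geometric-product identity directly to the arithmetic progression $\{Mn:n\in I\}$, with $\theta(q)$ recording the precise loss in the resulting product, or (ii) averaging over $M$ by means of the large-sieve inequality highlighted in the keywords, trading pointwise uniformity for square-mean cancellation. The auxiliary error $x^{\lambda+\varepsilon}$ in the theorem then absorbs the complementary edge regime in which every factor $n_i$ has size close to $x^{1/k}$ and the hyperbola split is ineffective: there one applies Gelfond's raw bound globally against $\tau_k$ by partial summation, paying only the harmless factor $\tau_k(n)\ll n^\varepsilon$.
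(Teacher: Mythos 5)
Your opening move (orthogonality of additive characters modulo $p$, reduction to $T_h(x)=\sum_{n\le x}\tau_k(n)e^{2\pi i hS(n)/p}$ for $h\not\equiv 0$) and the subsequent hyperbola-type reduction to sums over progressions $Mn$ with $M\le x^{1-1/k}$ do match the paper's strategy; the paper's Lemma~3 carries out this reduction more carefully, via Vinogradov's formula for $\tau(n)$ and an iterated splitting, precisely so that the divisor weights $\tau_{k-1}(M)$ get absorbed into $x^{\varepsilon}$ rather than summed naively. But your proposal stops exactly where the real work begins: you correctly name the central obstacle (multiplication by $M$ scrambles the base-$q$ digits, so Gelfond's bound does not apply to $S(Mn)$) and then offer only two speculative remedies, neither carried out and neither sufficient as stated. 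That is a genuine gap, not a detail.

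The paper's actual resolution is concrete and has several moving parts absent from your sketch. The condition $M\mid n$ is detected by additive characters $\frac1M\sum_{b}e^{2\pi i bn/M}$, so that the digit sum keeps its natural argument $n$; after completing the range by a Fourier integral one is left with hybrid sums $S_H(\alpha,z)=\sum_{n<q^H}e^{2\pi i(\alpha n+zS(n)/p)}$ evaluated at Farey points $\alpha=b_1/l_1-y$. These are then split according to the size of the reduced denominator $l_1$: for $l_1<q^{H/2}$ one factors $S_H$ as a product over digit positions and applies Gelfond's Lemma~1 to the tail factor, which produces the $x^{\lambda+\varepsilon}$ term; for large $l_1$ one applies Gallagher's well-spaced-points lemma (Lemma~2) together with the new $L^1$ mean-value estimate $\int_0^1|S_Q(\alpha,z)|\,d\alpha\le q^{Q\theta}$ (Lemma~4), and this is the sole source of the exponent $\theta(q)$. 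Consequently your attribution of the two error terms is also wrong: $x^{\lambda+\varepsilon}$ does not come from an ``edge regime'' of the hyperbola method but from the small-denominator arcs, and $\theta(q)$ is not a loss in Gelfond's product identity but the exponent in the $L^1$ bound of Lemma~4, which your argument never formulates.
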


In order to prove this theorem, in addition to Lemma 1, we need to estimate the integral of the modulus of the trigonometric sum given in Lemma 4.
\bigskip
\begin{center}
{\bf{2. THE LEMMAS}}
\end{center}
\bigskip
\begin{lemma}\label{l1}
Suppose that $\alpha$ is an arbitrary real number, $p>1$, $z$ is an integer, $(z,p)=1$, $(p,q-1)=1$,  $Q>1$, and
$$
S_Q(\alpha,z)=\sum_{n<q^Q}\exp\{\alpha n+\frac{z}{p}S(n)\}.
$$

Then the following inequality holds:
$$
|S_Q(\alpha,z)|\leqslant q^{\lambda Q},
$$
where $\lambda\in (0,1)$ depends only on $p$ and $q$.
\end{lemma}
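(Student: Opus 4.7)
\medskip
\noindent\textbf{Proof plan.}\quad The plan is to exploit the unique $q$-ary digit expansion in order to factorize $S_Q(\alpha,z)$ over the digits. Each $n<q^Q$ corresponds bijectively to a tuple $(c_0,\dots,c_{Q-1})\in\{0,\dots,q-1\}^Q$ with $n=\sum_j c_jq^j$ and $S(n)=\sum_j c_j$. Interpreting $\exp\{\,\cdot\,\}$ as $e(\,\cdot\,):=\exp(2\pi i\,\cdot)$ and summing each digit independently gives
\[
S_Q(\alpha,z)=\prod_{j=0}^{Q-1}T(\beta_j),\qquad T(\beta):=\sum_{c=0}^{q-1}e(c\beta),\qquad \beta_j:=\alpha q^j+\tfrac{z}{p}.
\]
A short computation gives the recursion $\beta_{j+1}=q\beta_j+\delta$ with $\delta:=-\tfrac{(q-1)z}{p}$.

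The central ingredient is the following two-term bound.

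\emph{Claim.} There exists $\eta=\eta(p,q)<1$ such that $|T(\beta)|\,|T(q\beta+\delta)|\le q^{2}\eta$ for every real $\beta$.

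To prove the claim I would work with $f(\beta):=|T(\beta)|/q$. Via the standard identity $|T(\beta)|=|\sin(\pi q\beta)/\sin(\pi\beta)|$, one checks that $f$ is continuous and $1$-periodic with $0\le f\le1$, and $f(\beta)=1$ if and only if $\beta\in\mathbb{Z}$. The product $h(\beta):=f(\beta)f(q\beta+\delta)$ is then continuous on the compact torus $\mathbb{R}/\mathbb{Z}$. If $h$ attained $1$ at some $\beta_0$, both factors would equal $1$, forcing $\beta_0\in\mathbb{Z}$ and $q\beta_0+\delta\in\mathbb{Z}$, hence $\delta\in\mathbb{Z}$. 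But the hypotheses $(z,p)=1$ and $(q-1,p)=1$ force $(q-1)z\not\equiv 0\pmod p$, so $\delta\notin\mathbb{Z}$. Compactness then yields $\eta:=\max h<1$.

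Given the claim, write $Q=2M+r$ with $r\in\{0,1\}$ and group consecutive factors into pairs:
\[
|S_Q(\alpha,z)|\le\Bigl(\prod_{m=0}^{M-1}|T(\beta_{2m})T(\beta_{2m+1})|\Bigr)\,|T(\beta_{2M})|^{r}\le(q^{2}\eta)^{M}q^{r}=q^{Q}\eta^{M}.
\]
Choosing $\lambda\in(0,1)$ with $q^{2(\lambda-1)}\ge\eta$ (any $\lambda$ sufficiently close to $1$ suffices, and it depends only on $p$ and $q$) gives $|S_Q(\alpha,z)|\le q^{\lambda Q}$, after absorbing the bounded factor $q^{r(1-\lambda)}$ by slightly enlarging $\lambda$ (or by invoking the trivial bound $|S_Q|\le q^{Q}$ for small $Q$).

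The main obstacle is the claim. It is a compactness statement rather than a calculation, and it makes essential use of the hypothesis $(p,q-1)=1$: without it the shift $\delta$ could be an integer, the choice $\beta=0$ would give $h(\beta)=1$, and no bound with $\lambda<1$ would be possible.
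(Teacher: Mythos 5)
Your proposal is correct. Note that the paper itself gives no proof of this lemma --- it simply refers to Gelfond's original article [1] --- and your argument is essentially a sound reconstruction of Gelfond's: the digit-by-digit factorization $S_Q=\prod_j T(\beta_j)$ (the same identity the paper uses explicitly in the proof of Lemma 4), followed by the observation that two consecutive factors satisfy $|T(\beta)|\,|T(q\beta+\delta)|\le q^2\eta$ with $\eta<1$, which is exactly where the hypotheses $(z,p)=1$ and $(p,q-1)=1$ enter via $\delta=-(q-1)z/p\notin\mathbb{Z}$. The only difference in flavour is that you obtain $\eta<1$ by compactness, whereas Gelfond extracts an explicit value (whence $\lambda=\ln 3/(2\ln 2)$ for $p=q=2$); for the qualitative statement of the lemma your version suffices, and your final bookkeeping (absorbing $q^{r(1-\lambda)}$ by enlarging $\lambda$, using $Q\ge 2$) is fine.
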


Proof. For the proof of this lemma, see \cite{1}.

\begin{lemma}\label{l2}
Suppose that $T_0$ and $T\ge \delta>0$ are real numbers and, $f$ is a complex-valued continuous
function on the closed interval $[T_0,\ T_0+T]$ possessing a continuous derivative in the interval $(T_0,\ T_0+T)$. Suppose that $F$  is the set of real numbers from the interval $[T_0+\frac{\delta}{2},T_0+T-\frac{\delta}{2}]$ such that $|t-t'|>\delta$ for all distinct numbers $t$ and $t'$ from $F$. Then
$$
\sum_{t\in F}|f(t)|\leqslant \frac{1}{\delta}\int_{T_0}^{T_0+T}|f(t)|dt+\int_{T_0}^{T_0+T}|f'(t)|dt
$$
\end{lemma}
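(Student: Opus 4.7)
The plan is to dominate each $|f(t)|$ (for $t\in F$) by a local average of $|f|$ over a short interval $I_t:=[t-\delta/2,\,t+\delta/2]$ centred at $t$, pay a small penalty involving $|f'|$, and then sum, using the fact that the $I_t$ are pairwise disjoint and all contained in $[T_0, T_0+T]$.

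First I would fix $t\in F$ and, for $s\in I_t$, write $f(t)=f(s)+(f(t)-f(s))$. Averaging in $s$ over $I_t$ and applying the triangle inequality gives
\[
|f(t)|\leq \frac{1}{\delta}\int_{I_t}|f(s)|\,ds+\frac{1}{\delta}\int_{I_t}|f(t)-f(s)|\,ds.
\]
For the second term, the fundamental theorem of calculus applied to the segment joining $s$ and $t$ yields $|f(t)-f(s)|\leq \int_{I_t}|f'(u)|\,du$ uniformly in $s\in I_t$, so after the $s$-integration this term is bounded by $\int_{I_t}|f'(u)|\,du$. Therefore
\[
|f(t)|\leq \frac{1}{\delta}\int_{I_t}|f(s)|\,ds+\int_{I_t}|f'(u)|\,du.
\]

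Summing this pointwise bound over $t\in F$ finishes the job, provided I invoke two geometric facts: the restriction $t\in[T_0+\delta/2,\,T_0+T-\delta/2]$ ensures $I_t\subset[T_0,T_0+T]$, while the separation condition $|t-t'|>\delta$ ensures that the $I_t$ are pairwise disjoint. The sums of integrals over the $I_t$ therefore collapse into single integrals over $[T_0,T_0+T]$, producing the asserted inequality.

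The argument is soft; no deep input is required. The only point that needs any care is the verification of these two geometric properties of the intervals $I_t$, both of which are immediate from the definition of $F$, so I do not anticipate a genuine obstacle. Continuity of $f$ on the closed interval and of $f'$ on the open one is enough for the fundamental-theorem step, with the $|f'|$-integral interpreted as improper at the endpoints if necessary.
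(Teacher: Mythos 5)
Your argument is correct and complete: the local averaging over $I_t=[t-\delta/2,t+\delta/2]$, the fundamental theorem of calculus for the error term, and the disjointness and containment of the intervals $I_t$ are exactly what is needed. The paper itself gives no proof, only a citation to Montgomery's book, and your proof is the standard one found there, so there is nothing further to reconcile.
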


Proof. For the proof of this lemma, see \cite{3}.

\begin{lemma}\label{l3} {Suppose that $k\geqslant 2$, $\varepsilon >0$ is an arbitrarily small number, and $|f(n)|\leqslant 1$. Then the
following inequality holds:
\[
\left| {\sum\limits_{n \leqslant x} {\tau _k (n)f(n)} } \right| \ll x^\varepsilon  \sum\limits_{l \leqslant x^{1 - \frac{1}
{k}} } {\left| {\sum\limits_{a(l) < ln \leqslant x}^{} {f(ln)} } \right| + x^{1 - \frac{1}
{k} + \varepsilon } },
\]
where $a(l)$ depends only on $l$ and is less than $x$.}
\end{lemma}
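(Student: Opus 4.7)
The plan is to prove the bound by induction on $k \geq 2$.

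\emph{Base case $k=2$.} Write $\sum_{n \leq x}\tau_2(n) f(n) = \sum_{dm \leq x} f(dm)$ and split by whether $d \leq \sqrt{x}$ or $d > \sqrt{x}$ (which forces $m < \sqrt{x}$). In the second subsum swap the roles of $d$ and $m$; both halves then match the claimed form, with $l = d$ and $a(l) = 0$ in the first half, and $l = m$, $a(l) = l\sqrt{x} < x$ in the second.

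\emph{Inductive step.} Assuming the lemma for $k-1$, use $\tau_k = \tau_{k-1} * 1$ to write
\[
\sum_{n \leq x}\tau_k(n) f(n) = \sum_{d \leq x}\sum_{m \leq x/d}\tau_{k-1}(m)f(dm),
\]
and split the outer sum by $d \leq x^{1/k}$ (Case A) vs.\ $d > x^{1/k}$ (Case B). In Case B one has $m < x^{1-1/k}$; swapping summations and bounding $\tau_{k-1}(m) \ll x^\varepsilon$ gives
\[
\ll x^\varepsilon \sum_{m \leq x^{1-1/k}}\Bigl|\sum_{x^{1/k} < n \leq x/m} f(nm)\Bigr|,
\]
which fits the target form with $l = m$ and $a(l) = lx^{1/k} < x$. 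In Case A, fix $d \leq x^{1/k}$ and apply the inductive hypothesis to the inner sum $\sum_{m \leq x/d}\tau_{k-1}(m)g(m)$ with $g(m) := f(dm)$, then sum over $d$. The IH error contributes
\[
\sum_{d \leq x^{1/k}}(x/d)^{1 - 1/(k-1) + \varepsilon} \ll x^{1 - 1/k + \varepsilon}
\]
by the arithmetic identity $\tfrac{k-2}{k-1} + \tfrac{1}{k(k-1)} = \tfrac{k-1}{k}$. The IH main term produces sums indexed by pairs $(d, l')$ with $d \leq x^{1/k}$ and $l' \leq (x/d)^{1 - 1/(k-1)}$; setting $L := dl'$, the same identity forces $L \leq x^{1-1/k}$. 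Since each $L$ admits at most $\tau(L) \ll x^\varepsilon$ decompositions $L = dl'$, the total is absorbed into $x^{2\varepsilon}\sum_{L \leq x^{1-1/k}}\bigl|\sum_n f(Ln)\bigr|$, giving the lemma.

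The main bookkeeping obstacle is the exponent identity $\tfrac{k-2}{k-1} + \tfrac{1}{k(k-1)} = \tfrac{k-1}{k}$, which simultaneously makes the IH error telescope to $x^{1-1/k+\varepsilon}$ and keeps the combined variable $L = dl'$ within the range $x^{1-1/k}$. A secondary issue is that several lower cutoffs $a(l')$ coming from the IH can collide on the same $L$; this is handled by bounding each contribution $|\sum_{a < Ln \leq b} f(Ln)|$ by two initial-segment sums, then choosing a single $a(L) < x$ per $L$ that dominates the $\tau(L)$-many contributions (the multiplicity is absorbed into the $x^\varepsilon$ factor).
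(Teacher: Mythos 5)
Your proof is correct, but it follows a genuinely different route from the paper's. The paper writes $\tau_k=\tau_{k-2}*\tau$, splits according to the \emph{relative} condition $n<(mn)^{2/k}$ versus $m\leqslant(mn)^{1-2/k}$, handles the $\tau$-factor by Vinogradov's identity (counting divisors below $\sqrt n$), and then iterates, stepping $k$ down by $2$ at a time --- which forces a case analysis on the parity of $k$ and separate treatment of $k=3,4$. You instead run a clean induction on $k$ via $\tau_k=\tau_{k-1}*1$ with the \emph{absolute} threshold $d\leqslant x^{1/k}$: the large-$d$ range gives the target form directly after pulling out $\tau_{k-1}(m)\ll x^\varepsilon$, and in the small-$d$ range the inductive hypothesis at scale $x/d$ applied to $m\mapsto f(dm)$ does the rest, with the identity $\frac{k-2}{k-1}+\frac{1}{k(k-1)}=1-\frac1k$ simultaneously controlling the accumulated error $\sum_{d\leqslant x^{1/k}}(x/d)^{1-\frac{1}{k-1}+\varepsilon}\ll x^{1-\frac1k+\varepsilon}$ and the range of $L=dl'$. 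The two closing devices are the same as the paper's: absorbing the divisor multiplicity of $L$ into $x^\varepsilon$, and defining $a(L)$ as the cutoff maximizing the inner modulus so that a single $a(l)$ per $l$ survives. Your version buys uniformity of structure (no parity split, no appeal to the explicit divisor formula) at the cost of having to note that the implied constant and the $x^\varepsilon$ powers compound over $O(k)$ induction steps and that the hypothesis must be applied uniformly in the shifted functions $f(d\cdot)$ and the scales $x/d\geqslant x^{1-1/k}$ --- both of which are harmless since $k$ is fixed and $\varepsilon$ is arbitrary.
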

\begin{proof} Suppose that $k=2$. Let us use the formula
\begin{equation}\label{Vin}
    \tau(n)=2\sum_{0<l<\sqrt{n}}\frac{1}{l}\sum_{y=0}^{l-1}e^{2\pi i\frac{ny}{l}}+\delta,
\end{equation}
where $\delta=1$ or $\delta=0$ depending on whether $n$ is the square of an integer or not (see \cite[p. 53]{Vin}).

We have
$$
\left|\sum_{n\le x}\tau(n)f(n)\right|\ll
\sum_{l<\sqrt{x}}\left|\sum_{l^2<ln\le x}f(ln)\right|+\sqrt{x}.
$$

Now let $k>2$. The following identity holds:
\[
S =  {\sum\limits_{n \leqslant x} {\tau _k (n)f(n)} }  = \sum\limits_{mn \leqslant x} {\tau _{k - 2} (m)\tau (n)f(mn)}  = S_1  + S_2,
\]
where
\[
\begin{gathered}
  S_1  = \sum\limits_{mn \leqslant x;n <\left( {mn} \right)^{2/k} } {\tau _{k - 2} (m)\tau (n)f(mn)},  \hfill \\
  S_2  = \sum\limits_{mn \leqslant x;m \leqslant \left( {mn} \right)^{1 - 2/k} } {\tau _{k - 2} (m)\tau (n)f(mn)};  \hfill \\
\end{gathered}
\]
here, by definition we set $\tau_{1}(m)$ identically equal to 1.

Let us estimate the sum $S_2$:
\begin{equation*}
\left| S_2 \right| \leqslant \sum\limits_{m \leqslant x^{1 - 2/k} } {\tau _{k - 2} (m)} \left| {\sum\limits_{m^{\frac{k}
{{k - 2}}}  \leqslant mn \leqslant x} {\tau (n)f(mn)} } \right|.
\end{equation*}

Again using formula (\ref{Vin}), we obtain
\[
\sum\limits_{m \leqslant x^{1 - 2/k} } {\tau _{k - 2} (m)} \left| {\sum\limits_{m^{\frac{k}
{{k - 2}}}  \leqslant mn \leqslant x} {\tau (n)f(mn)} } \right| \leqslant \]
\[\leqslant2\sum\limits_{m \leqslant x^{1 - 2/k} } {\tau _{k - 2} (m)} \left| {\sum\limits_{m^{\frac{k}
{{k - 2}}}  \leqslant mn \leqslant x} {\left( {\sum\limits_{d|n;d < \sqrt n } 1 } \right)f(mn)} } \right| + \sum\limits_{m \leqslant x^{1 - 2/k} } {\tau _{k - 2} (m)\sum\limits_{n^2  \leqslant \frac{x}
{m}} 1 }  \ll
\]
\[
\ll \sum\limits_{m \leqslant x^{1 - 2/k} } {\tau _{k - 2} (m)} \sum\limits_{d < \sqrt {\frac{x}
{m}} } {\left| {\sum_{\substack{m^{\frac{k}{{k - 2}}}  \leqslant dmn \leqslant x\\d^2m<dmn}}
{f(dmn)} } \right| + \sqrt x \sum\limits_{m \leqslant x^{1 - 2/k} } {\frac{{\tau _{k - 2} (m)}}
{{\sqrt m }}}}\ll
\]

\begin{equation}\label{n2}
  \ll \sum\limits_{l < x^{1 - 1/k} } {\left( {\sum\limits_{md = l;m \leqslant x^{1 - 2/k} ;d < \sqrt {\frac{x}
{m}} } {\tau _{k - 2} (m)} } \right)} \left|
{\sum\limits_{\substack{m^{\frac{k}{k - 2}}  \leqslant ln \leqslant x\\dl<ln}}
{f(ln)} } \right| + x^{1 - \frac{1}
{k} + \varepsilon }.
\end{equation}

Suppose that $l=dm$, where $d<\sqrt{\frac{x}{m}}$, $m\leqslant x^{1-2/k}$, and $a(l,m)=\max([m^{\frac{k}{k - 2}}]+1, dl)$.

Let $a(l)$ --- be the value of $a(l,m)$, for which the modulus of the sum
$$
\sum\limits_{{a(l,m) <ln \leqslant x}}
{f(ln)}
$$
is maximal. Then
\[
|S_2|\ll\sum\limits_{l < x^{1 - 1/k} }{\left( {\sum\limits_{m\mid l;m \leqslant x^{1 - 2/k} ;\frac{l}{m} < \sqrt {\frac{x}
{m}} } {\tau _{k - 2} (m)} } \right)} \left| {\sum\limits_{a(l)< ln \leqslant x} {f(ln)} } \right|+ x^{1 - \frac{1}
{k} + \varepsilon } \ll
\]
\[
\ll x^\varepsilon \sum\limits_{l < x^{1 - 1/k} }  \left| {\sum\limits_{a(l)  < ln \leqslant x} {f(ln)} } \right|+ x^{1 - \frac{1}
{k} + \varepsilon }.
\]

It remains to estimate the sum $S_1$.

If $k=3$, then $\tau_{k-2}(m)=1$, $\frac{2}{k}=1-\frac{1}{k}$, therefore,
$$
|S'_1|\leqslant \sum_{l<x^{1-1/k}}\tau(l)|\sum_{l^{\frac{k}{2}}<lm\leqslant x}f(lm)|\ll x^\varepsilon\sum_{l<x^{1-1/k}}|\sum_{l^{\frac{k}{2}}<lm\leqslant x}f(lm)|.
$$

If $k=4$, then  $\frac{2}{k}=1-\frac{2}{k}$, therefore,
$$
|S'_1|\leqslant \sum_{l<x^{1-2/k}}\tau(l)|\sum_{l^{\frac{k}{2}}<lm\leqslant x}\tau(m)f(lm)|.
$$
The last sum is estimated in the same way as $S_2$.

Suppose that $k\geqslant 5$. The following identity holds:
$$
S_1=S_1'+S_1'',
$$
where
$$
S_1'=\sum_{n<x^{2/k}}\tau(n)\sum_{\substack{n^{\frac{k}{2}}<m_1m_2n\leqslant x\\ m_2<(m_1m_2n)^{2/k}}}\tau_{k-4}(m_1)\tau(m_2)f(m_1m_2n),
$$
$$
S_1''=\sum_{n<x^{2/k}}\tau(n)\sum_{\substack{n^{\frac{k}{2}}<m_1m_2n\leqslant x\\ m_1n\leqslant(m_1m_2n)^{1-2/k}}}\tau_{k-4}(m_1)\tau(m_2)f(m_1m_2n).
$$

For $S_1''$ we have the inequality
$$
|S_1''|\leqslant \sum_{l\leqslant x^{1-2/k}}(\sum_{\substack{n\mid l\\n<x^{2/k}}}\left(\tau(n)\tau_{k-4}\left(\frac{l}{n}\right)\right)
\left|\sum_{\substack{n^{k/2}<lm_2\leqslant x\\l^{\frac{k}{k-2}}<lm_2}}\tau(m_2)f(lm_2)\right|.
$$

The last sum is estimated in the same way as $S_2$.

Consider the sum
$$
S_1'=\sum_{n<x^{2/k}}\tau(n)\sum_{m_2<x^{2/k}}\tau(m_2)\sum_{\substack{n^{\frac{k}{2}}<m_1m_2n\leqslant x\\ m_2^{k/2}<(m_1m_2n)}}\tau_{k-4}(m_1)f(m_1m_2n).
$$

Note that, as a result of the transformation $\tau_{k-2}(m)$ is replaced by $\tau_{k-4}(m_1)$.
If $k\neq 5,\, 6$ we shall repeat this transformation until $\tau_{k-4}(m_1)$ is replaced by $\tau_1(m_1)$ or $\tau(m_1)$ (depending on whether $k$ is even or
odd).

If $k$ is an odd number, then, as a result, we obtain the inequality
$$
|S_1'|\ll x^\varepsilon \sum\limits_{l < x^{1 - 1/k} }  \left| {\sum\limits_{a(l)  < lm \leqslant x} {f(lm)} } \right|,
$$
where $a(l)$ is a number less than $x$ and, possibly, not coinciding with the number $a(l)$ which appears in the estimate of $S_2$.

But if $k$ is an even number, then we obtain the inequality
$$
|S_1'|\ll x^{\frac{\varepsilon}{2}}\sum\limits_{l < x^{1 - 2/k} } \left| {\sum\limits_{b(l)  < lm \leqslant x} {\tau(m)f(lm)} } \right|,
$$
where $b(l)<x$. The last sum is estimated in the same way as $S_2$.

Lemma 3 is proved.
\end{proof}

\begin{lemma}\label{l4}
Suppose that $Q>1$,
$$
 S_{Q}(\alpha, z)=\sum\limits_{n<q^{Q}}e^{2\pi i (\alpha n+\frac{z}{p}S(n))}.
$$

Then the following inequality holds:
$$
\int_{0}^{1}|S_{Q}(\alpha, z)|\,d\alpha \leqslant q^{Q\theta},
$$
where
$\theta= \frac{\ln(6(1+\ln q))}{\ln q}$.
\end{lemma}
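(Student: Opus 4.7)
The proof relies on the fact that $S_Q(\alpha,z)$ factorises completely over $q$-adic digits. Writing every $n<q^Q$ uniquely as $n=\sum_{b=0}^{Q-1}c_bq^b$ with $0\le c_b<q$ and using $S(n)=\sum_b c_b$, one obtains
\[
S_Q(\alpha,z)=\prod_{b=0}^{Q-1}f\!\left(\alpha q^b+\tfrac{z}{p}\right),\qquad f(u):=\sum_{c=0}^{q-1}e^{2\pi i cu},
\]
so in particular $S_Q(\alpha,z)=f(\alpha+z/p)\cdot S_{Q-1}(q\alpha,z)$ and $|f(u)|=|\sin\pi qu|/|\sin\pi u|$.

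I would then induct on $Q$. Since $S_{Q-1}(\cdot,z)$ is $1$-periodic in its first argument, splitting $[0,1]=\bigcup_{j=0}^{q-1}[j/q,(j+1)/q]$ and substituting $\alpha=(j+u)/q$ gives
\[
\int_0^1|S_Q(\alpha,z)|\,d\alpha=\int_0^1|S_{Q-1}(u,z)|\cdot\Phi\!\left(\tfrac{u}{q}+\tfrac{z}{p}\right)du,
\]
where $\Phi(v):=\tfrac{1}{q}\sum_{j=0}^{q-1}|f(v+j/q)|$. The lemma therefore reduces to the pointwise inequality $\Phi(v)\le q^\theta=6(1+\ln q)$ uniformly in $v\in\mathbb{R}$; iterating the induction down to the base case $Q=1$ (where the integral is $\int_0^1|f(t)|\,dt\le 2+\ln q\le q^\theta$, obtained by splitting $[0,1]$ into $[0,1/q]$, $[1/q,1-1/q]$, $[1-1/q,1]$ and using $|\sin\pi t|\ge 2|t|$ on $[-1/2,1/2]$) then yields $\int_0^1|S_Q|\,d\alpha\le q^{Q\theta}$.

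The main obstacle is the uniform bound on $\Phi(v)$, essentially an averaged Lebesgue-constant estimate. The identity $\sin(\pi q(v+j/q))=\pm\sin\pi qv$ gives $|f(v+j/q)|=|\sin\pi qv|/|\sin\pi(v+j/q)|$. If $qv\in\mathbb{Z}$ the numerator vanishes and only one summand survives, giving $\Phi(v)=1$. Otherwise let $\delta$ be the distance from $v$ to the nearest point of $\frac{1}{q}\mathbb{Z}$, so $\delta\in(0,1/(2q)]$. Because the $q$ points $v+j/q$ are equispaced by $1/q$ on $\mathbb{R}/\mathbb{Z}$, the $q-1$ non-minimal distances to $\mathbb{Z}$ come in pairs $k/q\pm\delta$ for $k=1,\ldots,\lfloor q/2\rfloor$ (up to one unpaired term near $1/2$, which contributes $O(1)$). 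The pair inequality
\[
\frac{1}{k/q-\delta}+\frac{1}{k/q+\delta}=\frac{2k/q}{(k/q)^2-\delta^2}\le\frac{8q}{3k}\qquad(\text{valid since }\delta\le 1/(2q)),
\]
together with $|\sin\pi t|\ge 2|t|$ on $[-1/2,1/2]$, yields $\sum_{j=0}^{q-1}1/|\sin\pi(v+j/q)|\le 1/(2\delta)+O(q\ln q)$. The delicate cancellation is that the potentially singular $1/\delta$ term is compensated by $|\sin\pi qv|\le\pi q\delta$, which holds because $qv$ lies within $q\delta$ of an integer. Multiplying through gives $\Phi(v)\ll 1+\ln q$ with a constant comfortably below $6$, closing the induction.
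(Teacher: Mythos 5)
Your proof is correct and follows essentially the same route as the paper: the digit factorization $S_Q=\prod_b f(\alpha q^b+z/p)$, the splitting of $[0,1]$ into $q$ subintervals with the substitution $\alpha=(j+u)/q$, the recursion $\int_0^1|S_Q|\leqslant 6(1+\ln q)\int_0^1|S_{Q-1}|$, and the base case $\int_0^1|f|\ll \ln q$. The only difference is that you prove the key averaged bound $\Phi(v)\leqslant 6(1+\ln q)$ from scratch (pairing the points $v+j/q$ and playing $|\sin\pi qv|\leqslant\pi q\delta$ against the singular term), where the paper simply invokes the standard estimate $\frac{1}{q}\sum_j\min\bigl(q,\tfrac{1}{2\|(x+j)/q+z/p\|}\bigr)\leqslant 6(1+\ln q)$ from Karatsuba's book.
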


\begin{proof}  We have the identity
$$
 S_{Q}(\alpha, z)=\prod\limits_{r=0}^{Q-1}\sum\limits_{n=0}^{q-1}e^{2\pi i n (\alpha q^{r}+\frac{z}{p})}
$$
Let us divide the interval of integration into $q$ equal parts:
$$
\int_{0}^{1}|S_{Q}(\alpha, z)|\,d\alpha =\sum\limits_{j=0}^{q-1}\int_{j/q}^{(j+1)/q}|S_{Q}(\alpha, z)|\,d\alpha.
$$

In all the resulting integrals, let us make the change of variables
$$
\alpha =\frac{x+j}{q}, {~~~} j=0,1,\ldots,q-1.
$$
Then we obtain
$$
\int_{0}^{1}|S_{Q}(\alpha, z)|\,d\alpha =\frac{1}{q}\sum\limits_{j=0}^{q-1}\int_{0}^{1}\left|S_{Q}\left(\frac{x+j}{q}, z\right)\right|\,dx.
$$
Consider $S_{Q}\left(\frac{x+j}{q}, z\right)$. Extracting the first factor in the product, we obtain the identity
\[
S_Q \left( {\frac{{x + j}}
{q},z} \right) = \sum\limits_{n = 0}^{q - 1} {e^{2\pi in\left( {\frac{{x + j}}
{q} + \frac{z}
{p}} \right)} } \prod\limits_{r = 1}^{Q - 1} \sum\limits_{n=0}^{q-1}{e^{2\pi in\left( {\frac{{x + j}}
{q}q^r  + \frac{z}
{p}} \right)} }  =\] \[= \sum\limits_{n = 0}^{q - 1} {e^{2\pi in\left( {\frac{{x + j}}
{q} + \frac{z}
{p}} \right)} } S_{Q-1} \left( {x,z} \right).
\]

Suppose that
\[
h_j (x) = \left| {\sum\limits_{n = 0}^{q - 1} {e^{2\pi in\left( {\frac{{x + j}}
{q} + \frac{z}
{p}} \right)} } } \right|
.\]

We have the inequality
\[
\int_0^1 {\left| {S_Q \left( \alpha  \right)} \right|} d\alpha
\leqslant \int_0^1 {\frac{1} {q}} \sum\limits_{j = 0}^{q - 1} {h_j
(x)} \left| {S_{Q-1} (x)} \right|dx.
\]
Let us obtain a uniform (in $x$) estimate of the sum
\[
\sum\limits_{j = 0}^{q - 1} {h_j (x)}.
\]
The inequality
$
h_j (x)\leqslant q
$
is trivial.

Further, if
$\frac{z}{p}+\frac{x+j}{q} \notin \mathbb{Z}$,
then
\[
h_j (x) = \left| {\frac{{1 - e^{2\pi iq\left( {\frac{{x + j}}
{q} + \frac{z}
{p}} \right)} }}
{{1 - e^{2\pi i\left( {\frac{{x + j}}
{q} + \frac{z}
{p}} \right)} }}} \right| \leqslant \frac{1}
{{\left| {\sin \pi \left( {\frac{{x + j}}
{q} + \frac{z}
{p}} \right)} \right|}} \leqslant \frac{1}
{{2\parallel{{\frac{{x + j}}
{q} + \frac{z}{p}\parallel}}}},
\]
where $\|x\|$ is the distance from $x$ to the nearest integer. Thus,
\[
h_j (x) \leqslant \min \left( {q,\frac{1}
{{2\left| {\left| {\frac{{x + j}}
{q} + \frac{z}
{p}} \right|} \right|}}} \right).
\]
Let us estimate the sum
\[
\frac{1}
{q}\sum\limits_{j = 0}^{q - 1} {\min \left( {q,\frac{1}
{{2\left| {\left| {\frac{{x + j}}
{q} + \frac{z}
{p}} \right|} \right|}}} \right)}  \leqslant \frac{6}
{q}\left( {q + q\ln q} \right) = 6(1 + \ln q)
\]
(see, for example, \cite{AAK}).

We have obtained the inequality
\[
\int_0^1 {\left| {S_Q \left( \alpha  \right)} \right|} d\alpha
\leqslant 6(1 + \ln q)\int_0^1 {\left| {S_{Q - 1} \left( \alpha
\right)} \right|} d\alpha,
\]
valid for any $Q$, greater than 1. It follows that
\[
\int_0^1 {\left| {S_Q \left( \alpha  \right)} \right|} d\alpha
\leqslant \left( {6(1 + \ln q)} \right)^{Q - 1} \int_0^1 {\left|
{S_1 \left( \alpha  \right)} \right|} d\alpha  \leqslant
\]
\[
 \leqslant \left( {6(1 + \ln q)} \right)^{Q - 1} \int_0^1 {\min \left( {q,\frac{1}
{{\| {\alpha  + \frac{z}
{p}} \|}}} \right)} d\alpha.
\]

Let us estimate the last integral. Since the function $\|x\|$ is periodic with period 1 and even, it follows
that
$$
\int_0^1 {\min \left( {q,\frac{1} {{\| {\alpha  + \frac{z} {p}}
\|}}} \right)}
d\alpha=2\int_0^{1/2}\min\left(q,\frac{1}{\|\alpha\|}\right)d\alpha\leqslant
2+2\int^{1/2}_{1/q}\frac{d\alpha}{\alpha}<2+2\log q.
$$

This yields
\[
\int_0^1 {\left| {S_Q \left( \alpha  \right)} \right|} d\alpha
\ll\left( {6(1 + \ln q)} \right)^Q  = q^{Q\frac{{\ln \left( {6(1 +
\ln q)} \right)}} {{\ln q}}},\] which proves the assertion.
\end{proof}
\newpage
\bigskip
\bigskip
\bigskip
\bigskip
\bigskip
\bigskip
\begin{center}
\textbf{3. PROOF OF THE THEOREM \ref{t1}}
\end{center}
\bigskip

1. \emph{Preparing for the application of a large sieve}. We have the chain of equalities
$$
\sum\limits_{n \leqslant x;S(n) \equiv a(\bmod p)}\tau_k(n)=\sum_{n\leqslant x}\frac{1}{p}\sum_{z=0}^{p-1}\tau_k(n)e^{2\pi i\frac{z(S(n)-a)}{p}}=
$$
$$
=\frac{1}{p}\sum\limits_{n \leqslant x}\tau_k(n)+\frac{1}{p}\sum_{z=1}^{p-1}e^{-2\pi i \frac{az}{p}}\sum_{n\leqslant x}\tau_k(n)e^{2\pi i\frac{zS(n)}{p}}.
$$

Now, in order to prove the theorem, it suffices to estimate the sum
$$
\sum_{n\leqslant x}\tau_k(n)e^{2\pi i\frac{zS(n)}{p}}.
$$
for any noninteger $\frac{z}{p}$.  We apply Lemma \ref{l3} to this sum, obtaining:
$$
\left|\sum_{n\leqslant x}\tau_k(n)e^{2\pi i\frac{zS(n)}{p}}\right|\ll Wx^\varepsilon+x^{1-\frac{1}{k}+\varepsilon},
$$
where
$$
W = \sum\limits_{l \leqslant x^{1 - \frac{1}
{k}} } {\left|\sum\limits_{a(l) < ln \leqslant x}
e^{2\pi i(\frac{z}{p}S(ln))}  \right|},
$$
$a(l)<x$. Suppose that $H\in \mathbb{N}$, $x\in[q^{H-1}, q^{H})$, and $H\geqslant H_{0}>1$. Then
\[
W \leqslant \sum\limits_{l \leqslant x^{1 - \frac{1}
{k}} } {\frac{1}
{l}\sum\limits_{b = 1}^{l } {\left| {\sum\limits_{a(l) < n \leqslant x}{e^{2\pi i\left( {\frac{b}
{l}n + \frac{z}
{p}S(n)} \right)} } } \right|} }  =
\]
\[
 = \sum\limits_{l \leqslant x^{1 - \frac{1}
{k}} } {\frac{1} {l}\sum\limits_{b = 1}^{l } {\left|
{\sum\limits_{n < q^H }^{} {e^{2\pi i\left( {\frac{b} {l}n +
\frac{z} {p}S(n)} \right)} } \sum\limits_{a(l) < n_1  \leqslant x}
{\int_0^1 {e^{2\pi iy(n_1  - n)} } } } dy\right|}}\leqslant
\]
\[
 \leqslant \int_0^1 {\sum\limits_{l < x^{1 - 1/k} } {\frac{1}
{l}} \sum\limits_{b = 1}^{l} {\left| {\sum\limits_{a(l) < n_1
\leqslant x}  {e^{2\pi iyn_1 } } } \right|\left| {\sum\limits_{n <
q^H }^{} {e^{2\pi i\left( {\left( {\frac{b} {l} - y} \right)n +
\frac{z} {p}S(n)} \right)} } } \right|dy} }\leqslant
\]
\[
 \leqslant \int_0^1 {\min \left( {x,\frac{1}
{{\left\| y \right\|}}} \right)\sum\limits_{l < x^{1 - 1/k} } {\frac{1}
{l}} \sum\limits_{b = 1}^{l} {\left| {\sum\limits_{n < q^H }^{} {e^{2\pi i\left( {\left( {\frac{b}
{l} - y} \right)n + \frac{z}
{p}S(n)} \right)} } } \right|dy} }.
\]
Let us introduce the notation
\[
S_H \left( {\alpha ,z} \right) = \sum\limits_{n < q^H } {e^{2\pi i\left( {\alpha n + \frac{z}
{p}S(n)} \right)} } .
\]

The following identity holds:
\begin{equation}\label{***}
S_H \left( {\alpha ,z} \right) = \prod\limits_{l = 0}^{H - 1} {\sum\limits_{u = 0}^{q - 1} {e^{2\pi iu ( \alpha q^l  + z/p)}}}.
\end{equation}
Let us rewrite the sum

\[
\sum\limits_{l< x^{1 - 1/k} } {\frac{1}
{l}} \sum\limits_{b = 1}^{l} {\left| {S_H \left( {\frac{b}
{l} - y,z} \right)} \right| = } \sum\limits_{d < x^{1 - 1/k} } {\sum\limits_{l \leqslant x^{1 - 1/k} } {\frac{1}
{l}} } \sum\limits_{\substack{b = 1\\\left( {b,l} \right) = d}}^{l} {\left| {S_H \left( {\frac{b}
{l} - y,z} \right)} \right| = }
\]
\[
 = \sum\limits_{d < x^{1 - 1/k} } {\frac{1}
{d}\sum\limits_{l_1  \leqslant \frac{{x^{1 - 1/k} }}
{d}} {\frac{1}
{{l_1 }}} } {\sum\limits_{b_1  = 1}^{l_1}}{^{'}} { \left| {S_H \left( {\frac{{b_1 }}
{{l_1 }} - y,z} \right)} \right| \ll }
\]
\[
 \ll \log x{\sum\limits_{l_{1} < x^{1 - 1/k} } {\frac{1}
{{l_1 }}}}{ \sum\limits_{b_1  = 1}^{l_1 }}{^{'}} {\left| {S_H \left( {\frac{{b_1 }}
{{l_1 }} - y,z} \right)} \right|},
\]
where the prime means that $(b_{1}, l_{1})=1$.

Further,
\[
\sum\limits_{l_1  < x^{1 - 1/k} } {\frac{1}
{l_1}} \mathop{\sum_{b_1  = 1}^{l_1 }}{^{'}} {\left| {S_H \left( {\frac{{b_1 }}
{{l_1 }} - y,z} \right)} \right|}  \ll\]
\[\ll \sum\limits_{r \leqslant \left( {1 - \frac{1}
{k}} \right)\log _q x + 1} {q^{ - r+1} } \sum\limits_{q^{r - 1}  \leqslant l_1  < q^r } {\sum\limits_{b_1  = 1}^{l_1 }{^{'}} { \left| {S_H \left( {\frac{{b_1 }}
{{l_1 }} - y,z} \right)} \right| \leqslant } }
\]
\[
 \leqslant \sum\limits_{r \leqslant \frac{H}
{2}} {q^{ - r+1} } \sum\limits_{q^{r - 1}  \leqslant l_1  < q^r } {\sum\limits_{b_1  = 1}^{l_1 } {{^{'}} \left| {S_H \left( {\frac{{b_1 }}
{{l_1 }} - y,z} \right)} \right| + } }\] \[+ \sum\limits_{\frac{H}
{2} < r \leqslant \left( {1 - \frac{1}
{k}} \right)\log _q x + 1} {q^{ - r+1} } \sum\limits_{q^{r - 1}  \leqslant l_1  < q^r } {\sum\limits_{b_1  = 1}^{l_1 } {{^{'}} \left| {S_H \left( {\frac{{b_1 }}
{{l_1 }} - y,z} \right)} \right|} }.
\]

For $r<\frac{H}{2}$ we use the identity
\[
\left| {S_H \left( {\frac{{b_1 }}
{{q_1 }} - y,z} \right)} \right| = \left| {S_{2r} \left ( {\frac{{b_1 }}
{{q_1 }} - y,z} \right)} \right|\,\left| {S_{H - 2r}( ( {\frac{{b_1 }}
{{q_1 }} - y)q^{2r},z)} } \right|,
\]
which immediately follows from (\ref{***}), and also use Lemma 1
\[
\left| {S_{H - 2r} \left( {\left( {\frac{{b_1 }}
{{q_1 }} - y} \right)q^{2r} ,z} \right)} \right| \ll q^{\left( {H - 2r} \right)\lambda +1}  \ll x^\lambda  q^{ - 2r\lambda+1},
\]
where $0<\lambda <1$, which is valid for $\frac{z}{p}\notin \mathbb{Z}$.

Thus,
\[
W \ll x^\lambda\log x  \sum\limits_{r \leqslant \frac{H}
{2}} {q^{ - r - 2r\lambda+1 } } \int_0^1\min \left(x,\frac{1}{\|y\|}\right)\sum\limits_{q^{r - 1}  \leqslant l_1  < q^r } {\sum\limits_{b_1  = 1}^{l_1 } { \left| {S_{2r} \left( {\frac{{b_1 }}
{{l_1 }} - y,z} \right)} \right|} }dy +
\]
\[
 + \log x \int_0^1\min \left(x,\frac{1}{\|y\|}\right)\sum\limits_{\frac{H}
{2} < r \leqslant \left( {1 - \frac{1}
{k}} \right)\log _q x + 1} {q^{ - r+1} } \sum\limits_{q^{r - 1}  \leqslant l_1  < q^r }{\sum\limits_{b_1  = 1}^{l_1 } {{^{'}} \left| {S_{H} \left( {\frac{{b_1 }}
{{l_1 }} - y,z} \right)} \right|} }dy.
\]

2. \emph{Termination of the proof}. Now we have all that is needed to apply the inequality of the large sieve
(Lemma 2). We have

\[
W\ll x^\lambda \log x \int_0^1\min
\left(x,\frac{1}{\|y\|}\right)dy\sum\limits_{r \leqslant \frac{H} {2}} {q^{r
- 2r\lambda +1} } \int_0^1 {\left| {S_{2r} \left( {\alpha-y,z}
\right)} \right|} d\alpha  +
\]
\[
 + \log x \int_0^1\min \left(x,\frac{1}{\|y\|}\right)dy\sum\limits_{\frac{H}
{2} < r \leqslant \left( {1 - \frac{1} {k}} \right)\log _q x + 1}
{q^{ r+1} } \int_0^1 {\left| {S_{H} \left( {\alpha  - y,z}
\right)} \right|} d\alpha .
\]

Since the functions $S_{2r}(\alpha, z)$ and $S_{H}(\alpha, z)$ are periodic with period, by Lemma \ref{l4} we have
\[
\int_0^1 {\left| {S_{2r} \left( {\alpha  - y,z} \right)} \right|}
d\alpha  = \int_0^1 {\left| {S_{2r} \left( {\alpha ,z} \right)}
\right|} d\alpha  \ll q^{2r\theta} ,
\]
\[
\int_0^1 {\left| {S_H \left( {\alpha  - y,z} \right)} \right|}
d\alpha  = \int_0^1 {\left| {S_H \left( {\alpha ,z} \right)}
\right|} d\alpha  \ll q^{H\theta }  \ll x^\theta;
\]
therefore,
\[
W\ll x^\lambda \log^2 x  \sum\limits_{r \leqslant \frac{H}
{2}} {q^{r\left( {1 - 2\lambda  + 2\theta } \right)+1} }  + \log^2 x\sum\limits_{\frac{H}
{2} < r \leqslant \left( {1 - \frac{1}
{k}} \right)\log _q x + 1} {q^{  r+1} } x^\theta \ll
\]
\[
\ll \left(x^\lambda  \sum\limits_{r \leqslant \frac{H}
{2}} {q^{r\left( {1 - 2\lambda  + 2\theta } \right)+1} }  + q x^{1 - \frac{1}
{k} + \theta }\right)\log^2 x.
\]

Consider the following two cases:

1){~~~} if  $1-2\lambda+2\theta <0$; then $\sum\limits_{r \leqslant \frac{H}
{2}} {q^{r\left( {1 - 2\lambda  + 2\theta } \right)} }  \ll 1;$

2){~~~} if $1-2\lambda+2\theta \geqslant 0$; then $\sum\limits_{r \leqslant \frac{H}
{2}} {q^{r\left( {1 - 2\lambda  + 2\theta } \right)} }  \ll q^{\frac{H}
{2}\left( {1 - 2\lambda  + 2\theta } \right)}  \ll x^{\frac{1}{2} - \lambda  + \theta }$.

Thus,

\[
\left| W \right| \ll \left( {x^\lambda   + x^{1 - \frac{1}
{k} + \theta } } \right)\ln ^2 x\ll \left( {x^\lambda   + x^{1 - \frac{1}
{k} + \theta } } \right)\ln ^2 x.
\]
(the multiplier $q$ in the final estimate is not written, because we assume that the parameter $q$ is
nonincreasing with the growth of the main parameter $x$).

Theorem  is proved.

\end{document}